\numberwithin{equation}{section}
\definecolor{brown}{cmyk}{0, 0.72, 1, 0.45}
\definecolor{grey}{gray}{0.5}
\renewcommand{\epsilon}{\varepsilon}
\def\s{\sigma}
\def\t{\tau}
\newcounter{rot}
\def\nn{\nonumber}
\def\a{\alpha} \def\b{\beta} \def\d{\delta} \def\D{\Delta}
    \def\g{\gamma}
  \def\k{\kappa} 
     \def\l{\ell}
  \def\s{\sigma} 
\def\t{\tau}
\newtheorem{conjecture}{Conjecture}
\newtheorem{opq}{Open Question}
\newtheorem*{conjecture*}{Conjecture}
\newtheorem{theorem}{Theorem}[section]
\newtheorem*{theorem*}{Theorem}
\newtheorem{claim}[theorem]{Claim}
\newtheorem{corollary}[theorem]{Corollary}
\newtheorem{remark}[theorem]{Remark}
\newcommand{\rbrac}[1]{\left(#1\right)}
\newcommand{\cbrac}[1]{\left\{#1\right\}}
\newcommand{\sbrac}[1]{\left[{ #1}\right]}
\newcommand{\bfrac}[2]{\left(\frac{#1}{#2}\right)}
\newcommand{\toloc}{\stackrel{\textrm{loc}}{\to}}
\newcommand{\set}[1]{\left\{#1\right\}}
\def\E{\mathbb{E}}
\def\P{\mathbb{P}}
\def\Pr{\mathbb{P}}
\newcommand{\eps}{\varepsilon}
\newcommand{\of}[1]{\left( #1 \right) }
\newcommand{\sqbs}[1]{\left[ #1 \right]}
\newcommand{\Mean}[1]{\E\sqbs{#1}}
\newcommand{\tbf}[1]{\textbf{#1}}
\renewcommand{\Pr}[1]{\mathbb{P}\left[ #1 \right]}
\newcommand{\ignore}[1]{}
\newcommand{\beq}[1]{\begin{equation}\label{#1}}
\newcommand{\eeq}{\end{equation}}
\newcommand{\mc}[1]{\mathcal{#1}}
\def\cH{{\cal H}}
\title{The Matching Process and Independent Process in Random Regular Graphs and Hypergraphs}
\author{Deepak Bal\thanks{Department of Mathematics, Montclair State University, Montclair, NJ, 07043 U.S.A. \texttt{deepak.bal@montclair.edu}}\and  Patrick Bennett\thanks{Department of Mathematics, Western Michigan University, Kalamazoo, MI, 49008 U.S.A.
\texttt{patrick.bennett@wmich.edu}}}
\date{}
\begin{document}
\maketitle

\begin{abstract}
In this note, we analyze two random greedy processes on sparse random graphs and hypergraphs with a given degree sequence. First we analyze the matching process, which builds a set of disjoint edges one edge at a time; then we analyze the independent process, which builds an independent set of vertices one vertex at a time.  We use the differential equations method and apply a general theorem of Warnke. Our main contribution is to significantly reduce the associated systems of differential equations and simplify the expression for the final size of the matching or independent set.  
\end{abstract}

\section{Introduction}
 A \tbf{matching} in a hypergraph is a collection of vertex-disjoint edges. The algorithmic theory of matchings in graphs is very well studied. In particular, Edmond's Blossom Algorithm provides a polynomial time algorithm to find the largest size matching in a general graph (see e.g. \cite{LP}). However,  for $r\ge 3$ the problem of determining whether a given matching is maximum in a $r$-uniform hypergraph is NP-hard. An \tbf{independent set} is a set of vertices containing no edge. The problem of determining whether a given independent set is maximum in a $r$-uniform hypergraph is NP-hard for all $r\ge 2$.

 In this paper we study the natural random greedy algorithms for producing matchings and independent sets in hypergraphs. For both matchings and independent sets, we are looking for a collection of objects that do not ``conflict" with each other. The natural random greedy algorithm is then to build our collection of objects one by one, at each step choosing a random object that does not conflict with previous choices, until no such choice is possible and we have a maximal collection. We will call the random greedy matching algorithm the \tbf{ matching process}, and we will call the random greedy independent set algorithm the \tbf{ independent process}. See Algorithms \ref{algo:match} and \ref{algo:ind} for details.

\begin{minipage}{.4\textwidth}
\begin{algorithm}[H]\label{algo:match}
  \SetKwInOut{Input}{Input}
   \SetKwInOut{Output}{Output}
	\caption{\textsc{matching process}}
	\Input{Hypergraph $H=(V,E)$}
	\Output{Matching $M$}
	$M = \emptyset$\;
	\While{$E\neq \emptyset$}
	{Select $e \in E$ uniformly at random\;
	 $M	\gets M \cup \set{e}$\;
	 $E\gets E\setminus \set{e'\in E \,:\, e'\cap e \neq \emptyset}$\;
	}
	\Return $M$\;
	
\end{algorithm} 
\end{minipage}
\hfill
\begin{minipage}{.5\textwidth}
\begin{algorithm}[H]\label{algo:ind}
  \SetKwInOut{Input}{Input}
   \SetKwInOut{Output}{Output}
	\caption{\textsc{independent process}}
	\Input{Hypergraph $H=(V,E)$}
	\Output{Independent set $I$}
	$I = \emptyset$\;
	\While{$V \neq \emptyset$}
	{Select $v \in V$ uniformly at random\;
	 $I	\gets I \cup \set{v}$\;
	 $V \gets V\setminus \set{v'\in V \,:\, I \cup \{v'\} \mbox{ contains an edge of } E}$\;
	}
	\Return $I$\;
	
\end{algorithm} 
\end{minipage}

\subsection{The matching process}

 The performance of the matching process was analyzed on arbitrary (ordinary) graphs by Dyer and Frieze \cite{DF91}, on dense random graphs by Tinhofer \cite{Tin}, and on sparse random graphs by Dyer, Frieze and Pittel \cite{DFP93}. The analysis in \cite{DFP93} was extended to sparse random hypergraphs in the Ph.D. thesis of Chebolu \cite{Che}. The algorithm was analyzed on deterministic hypergraphs by Wormald \cite{Wor99}, who proved that for a $r$-uniform, $\D$-regular hypegraph on $n$ vertices where $\D \rightarrow \infty$ sufficiently fast as $n\rightarrow \infty$, with high probability\footnote{We say a sequence of events $A_n$ happens \tbf{with high probability} (whp), if $\lim_{n\to\infty}\Pr{A_n} =1$ } the matching process outputs a matching that covers all but $o(n)$ vertices. Bennett and Bohman \cite{BeBoh} improved Wormald's bound on the number of uncovered vertices assuming the hypergraph satisfies a co-degree condition. However there is no reason to believe that the analysis in \cite{BeBoh} is optimal, and a folklore conjecture claims that there should be a better bound:
 \begin{conjecture}[Folklore]\label{conj:folk}
 For an $r$-uniform, $\D$-regular hypergraph with $\D \rightarrow \infty$ and assuming some weak co-degree conditions, the fraction of unmatched vertices at the end of the matching process should be \begin{equation}\label{eq:conj}
\D^{-\frac{1}{r-1} + o(1)}.
\end{equation}
 \end{conjecture} \noindent
See \cite{BeBoh} for more discussion on this conjecture.


 We now describe our random hypergraph model. Suppose $\D=\D(n)$ is a function of $n$ and 
\[\tbf{n} = \tbf{n}(n) = (n_1, n_2. \ldots, n_\D) \in \mathbb{N}^{\D}\] is a vector such that $n_\D$ is positive\footnote{ we use the convention $0 \in \mathbb{N}$} and $\sum_{i=1}^{\D}n_i = n$.
Let $\cH(n, r, \tbf{n})$ represent the probability space with uniform distribution over all $r$-uniform hypergraphs on $n$ vertices with $n_i$ many vertices of degree $i$. In the special case when $n_\D =n$, $\cH(n,r,\tbf{n})$ represents the random $r$-uniform, $\D$-regular hypergraph and we denote this by $\cH(n,r,\D)$.  

Bohman and Frieze \cite{BF11} studied matchings in the {\bf fixed degree sequence random graph model}, which in our terminology is the graph case $\cH(n,2,\tbf{n})$ where the vector $\frac 1n \tbf{n}(n)$ is constant (i.e. the same for all $n$). In particular they analyzed the performance of the Karp-Sipser matching algorithm and determined a sufficient condition on the degree sequence in order for the random graph to have an almost perfect matching (i.e. a matching that leaves $o(n)$ vertices unmatched) w.h.p.. 

Our main result for the matching process is the following. From now on all asymptotics are as $n \rightarrow \infty$.
 \begin{theorem} \label{thm:match}
Suppose $\D = o(n^{1/3})$. Let 
\begin{equation}\label{eqn:d1d2}
    d_1:= \frac1n \sum_{i=1}^\D in_i, \qquad d_2:= \frac1n \sum_{i=1}^\D i(i-1)n_i,
\end{equation}

and suppose that $d_1, d_2 = \Theta(1)$. Let 
\begin{equation} \label{eqn:fgdef}
  f(x) := \frac1n \sum_{i=1}^\D n_i x^i, \qquad g(x):= \frac{1}{d_1^{r-1}} - \int_x^1 \frac{dz}{(f'(z))^{r-1}} . 
\end{equation}
 Then with high probability  the matching process run on  $H\sim \cH(n,r,\tbf{n})$ terminates with a matching that covers all but 
\[
f(g^{-1}(0)) n + o(n)
\]
vertices. 
\end{theorem}
 \noindent 
 Note that $f$ is the generating function for the sequence $(n_i/n)$. Note that in the fixed degree sequence case (i.e. when the vector $\frac 1n \tbf{n}(n)$ does not depend on $n$) we have that $f(x)$ is just a fixed polynomial. In that case the antiderivative in the definition of $g(x)$ can (at least in principle) be calculated using partial fractions. Unfortunately, in general this antiderivative will be messy and involve logarithms and arctangents, in which case one would probably resort to numerical methods to approximate $g^{-1}(0)$. However, the solution can be written explicitly in the case corresponding to sparse regular hypergraphs:
\begin{corollary}\label{cor:main}
Suppose that $r, \D \ge2$ are fixed   such that $r+\Delta \ge 5$. Then the matching process run on $H\sim \cH(n,r, \D)$ produces a matching which covers all but \[ \bfrac{1}{(r-1)(\D-1)}^\frac{\D}{(r-1)(\D-1)-1}n + o(n)\] many vertices whp.
\end{corollary}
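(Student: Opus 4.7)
The plan is to specialize Theorem~\ref{thm:main} to the regular case by evaluating $Q$ in closed form and solving the resulting one-variable equation. For $H\sim\cH(n,k,\D)$ we have $\z_\D = 1$ and $\z_i = 0$ for $i<\D$, so $P(x) = x^\D$, $P'(x) = \D x^{\D-1}$, and $P''(x) = \D(\D-1)x^{\D-2}$. I would begin by substituting these into the integrand $(k-1)wP''(w)/P'(w)^k$ and collecting exponents. The integrand simplifies to $\frac{(k-1)(\D-1)}{\D^{k-1}}w^{-m}$, where I set $m := (k-1)(\D-1)$, since the total exponent of $w$ is $1+(\D-2)-k(\D-1) = -(k-1)(\D-1)$. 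The hypothesis $k+\D\ge 5$ combined with $k,\D\ge 2$ ensures $m\ge 2$, which is what keeps us safely away from the logarithmic case $m=1$ and allows the power rule to deliver the clean antiderivative
\[Q(x) \;=\; \frac{m}{\D^{k-1}(m-1)}\bigl(x^{1-m}-1\bigr).\]

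Next I would substitute into the fixed-point equation $\a = Q(\a)P'(\a)^{k-1}$. The factor $P'(\a)^{k-1} = \D^{k-1}\a^{m}$ cancels the $\D^{k-1}$ in the denominator of $Q(\a)$, and the remaining powers of $\a$ telescope, since $(\a^{1-m}-1)\a^{m} = \a - \a^{m}$. Clearing the constant $m/(m-1)$ and collecting like terms reduces the equation to $\a = m\a^{m}$, equivalently $\a^{m-1} = 1/m$. Because $m\ge 2$, the map $\a\mapsto m\a^{m-1}$ is strictly increasing on $(0,1)$ and equals $1$ exactly once, so this confirms the uniqueness hypothesis of Theorem~\ref{thm:main} and yields the explicit root
\[\a_{\mathrm{end}} \;=\; m^{-1/(m-1)} \;=\; \bigl((k-1)(\D-1)\bigr)^{-1/((k-1)(\D-1)-1)} \,\in\, (0,1).\]

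Finally, the fraction of uncovered vertices predicted by Theorem~\ref{thm:main} is $P(\a_{\mathrm{end}}) = \a_{\mathrm{end}}^{\D} = \bigl((k-1)(\D-1)\bigr)^{-\D/((k-1)(\D-1)-1)}$, which is precisely the expression in the corollary.

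There is no real conceptual obstacle: once Theorem~\ref{thm:main} is available, the corollary is purely a calculation. The only things that need a bit of care are (i) tracking the exponent $m = (k-1)(\D-1)$ consistently through the integration and through the cancellation with $P'(\a)^{k-1}$, and (ii) noting that the hypothesis $k+\D\ge 5$ is used precisely to guarantee $m\ge 2$, so that both the antiderivative has the stated power-law form and the resulting equation $\a^{m-1}=1/m$ has a unique root in $(0,1)$.
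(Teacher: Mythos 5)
Your proposal is correct and follows essentially the same route as the paper: specialize $P(x)=x^\D$, integrate to get $Q(x)=\frac{(k-1)(\D-1)}{\D^{k-1}[(k-1)(\D-1)-1]}\bigl(x^{-[(k-1)(\D-1)-1]}-1\bigr)$, and solve $\a=Q(\a)P'(\a)^{k-1}$ to obtain $\a_{end}=\bigl((k-1)(\D-1)\bigr)^{-1/((k-1)(\D-1)-1)}$ and hence $P(\a_{end})$ as claimed. Your remark that $k+\D\ge 5$ serves exactly to force $(k-1)(\D-1)\ge 2$ matches the paper's use of that hypothesis.
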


Actually, we will also extend the above result to deterministic high-girth hypergraphs, using a result of Krivelevich, M\'esz\'aros, Michaeli and Shikhelman \cite{KMMS} (see subsection \ref{sec:KMMS} for a proof sketch). A Berge $k$-cycle is a sequence of $k$ distinct vertices $v_1, \ldots, v_k$ and $k$ distinct edges $e_1, \ldots, e_k$ such that $e_i$ contains $v_i$ and $v_{i+1}$ (indices modulo $k$).
The \emph{(Berge) girth} of a hypergraph $H$ is the smallest integer $k$ such that $H$ contains a Berge $k$-cycle (we say the girth is infinity if no Berge cycle exists).

\begin{corollary}\label{cor:matchgirth}
For fixed $r, \D$ such that $r+\Delta \ge 5$, let $\mc{H}_n$ be a sequence of $r$-uniform $\D$-regular hypergraphs with girth tending to infinity. Then $\mc{H}_n$ has a matching that covers all but at most 
\[ \bfrac{1}{(r-1)(\D-1)}^\frac{\D}{(r-1)(\D-1)-1}n + o(n)\]
vertices. Moreover, the matching process w.h.p. returns such a matching. 
\end{corollary}

The main term in the above corollaries lends some credence to Conjecture \ref{conj:folk}. Indeed, if $\Delta \rightarrow \infty$  then 
\begin{equation*}
\bfrac{1}{(r-1)(\D-1)}^\frac{\D}{(r-1)(\D-1)-1} = \D^{-\frac{1}{r-1} + o(1)}
\end{equation*}
which is the same conjectured fraction from line \eqref{eq:conj}.

In \cite{CFMR96}, Cooper, Frieze, Molloy and Reed used the small subgraph conditioning method of Robinson and Wormald \cite{RW92, RW94} to prove that for $r \ge 3, \D \ge 2$
\begin{equation}\label{eqn:CFMR}
    \lim_{n\to\infty}\P\sqbs{\cH(n,r,\D) \textrm{ has a perfect matching}} = 
\begin{cases}
1 \quad \textrm{if $r < \sigma_\D$} \\
0 \quad \textrm{if $r > \sigma_\D$}
\end{cases}
\end{equation}

where $\s_\D := \frac{\log \D }{(\D -1)\log\bfrac{\D}{\D-1}} +1$. Thus for any $r$, $f(r) = \min\set{\D:\, r < \s_\D }$ gives the threshold of $\D$ such that $\cH(n, r, \D)$ has a perfect matching and for large $r$, $f(r)\sim e^{r-1}.$ Thus it is interesting to note that near this threshold, the greedy algorithm finds a matching which, asymptotically in $r$, covers only $(1- e^{-1} + o(1))$ fraction of the vertices even though there is a perfect matching w.h.p..

\subsection{The independent process}
The independent process, also referred to as the random greedy independent set algorithm (see e.g. \cite{BeBoh}) or random sequential adsorption in the realms of chemistry and physics (see e.g. \cite{Pen01}), has been studied for many years. The algorithm was studied on the binomial random graph (in the context of coloring) by Grimmett and McDiarmid \cite{GM75}, and Bollob\'{a}s and Erd\H{o}s \cite{BE76}. The algorithm was analyzed on random regular graphs by Wormald (see \cite{Wor95, Wor97}) and on the fixed degree sequence graph model by Brightwell, Janson and {\L}uczak \cite{BJL}. 

Our main result for the independent process is as follows.

\begin{theorem}\label{thm:ind}
Suppose $\D =o\rbrac{n^{1/4} / \log^{1/2}n}$ and that $d_1, d_2 = \Theta(1)$ (see line \eqref{eqn:d1d2}). Let $f(x)$ be as defined in line \eqref{eqn:fgdef}. Let $\a=\a(x), \b=\b(x)$ be the unique solution to the system
\begin{equation}\label{eqn:alphadiffeq}
   \a' = \frac{f'(1-\a^{r-1})}{d_1 f(1-\a^{r-1})}, \quad \b' = -\frac{1}{f(1-\a^{r-1})}, \quad  \a(0)=0, \quad \b(0)=1.
\end{equation}
Then with high probability  the independent process run on $H\sim \cH(n,r, \tbf{n})$ terminates with an independent set of size 
\begin{equation}\label{eqn:finalindsize}
   \b^{-1}(0) n + o(n)
\end{equation}
vertices. 
\end{theorem}

Note that since rational functions have elementary antiderivatives, in the fixed degree sequence case (when $f$ is a polynomial) we have that $\a$ is the inverse of an elementary function. Also we have that 
\begin{align}\label{eqn:gammadef}
    \b & = 1- \int_0^t  \frac{d\t}{f(1-\a(\t)^{r-1})}\\
    &= 1- \int_{\a(0)}^{\a(t)}  \frac{d_1 du}{f'(1-u^{r-1})} \\
    &=\g(\a(t))
\end{align}
where the second line follows from the substitution $u=\a(\t)$ and
\begin{equation}
    \g(x):= 1- \int_{0}^{x} \frac{d_1 du}{f'(1-u^{r-1})}
\end{equation}
is an elementary function. So roughly speaking, both $\a, \b$ are ``close" to being elementary.

The proofs of Theorems \ref{thm:match} and \ref{thm:ind}  are applications of the so-called differential equations method\footnote{Readers unfamiliar with the differential equations method should refer to the surveys \cite{DM10} and \cite{Wor99} or more specifically Theorem \ref{thm:Lutz} in Section \ref{sec:conc} of this paper.}. In each proof we will directly apply a general theorem of Warnke \cite{Lutz} to show concentration a family of random variables that evolve with the process. In Section \ref{sec:setup} we will set up the analysis of each process, and for each process we will derive a system of differential equations describing how our variables evolve with the process.  In Section \ref{sec:conc}, we will complete the proof of Theorems \ref{thm:match} and \ref{thm:ind} by applying Warnke's result. In section \ref{sec:UBs}, we will use the first moment method to provide some bounds on the largest size of a matching or independent set in a random regular hypergraph.

\subsection{Connections to other results on the independent process}

Theorem \ref{thm:ind} has connections to two other recent results. Brightwell, Janson and {\L}uczak \cite{BJL} analyzed the independent process  on the graph case $\cH(n,2, \tbf{n})$ and determined the size of the final independent set. Thus our Theorem \ref{thm:ind} is (roughly speaking, ignoring a few technical assumptions) a generalization of \cite{BJL} and we expect that our formula for the final size of the independent set should be the same as the one in \cite{BJL} when $r=2$. In subsection \ref{sec:BJL} we show this is the case. Pippenger \cite{Pip} and independently Lauer and Wormald \cite{LW} analyzed the independent process on (deterministic) regular graphs of high girth and found the expected size of the final independent set (Pippenger \cite{Pip} also did something similar for the matching process). Gamarnik and Goldberg \cite{GG10} then established concentration of the final size of the independent set or matching produced by the processes (again working on regular graphs of high girth). Nie and Verstra\"ete \cite{NV} also analyzed the independent process on regular hypergraphs of large girth. Since sparse random regular hypergraphs have very few short cycles, it is natural to expect that the final independent set obtained in the setting of \cite{NV} should be asymptotically the same as ours. Indeed, in subsection \ref{sec:NV} we will see that this is the case as well. 

\subsubsection{Graph case: Brightwell, Janson and {\L}uczak}\label{sec:BJL}

Brightwell, Janson and {\L}uczak proved the following.
\begin{theorem}[Brightwell, Janson, {\L}uczak \cite{BJL}]
Let $(p_k)_0^\infty$ be a probability distribution, and let $d_1 = \sum_{k=1}^\infty kp_k \in (0, \infty).$ Assume $n_k/n \rightarrow p_k$ for each $k$ and that $\sum_{k=1}^\infty kn_k/n \rightarrow d_1$ as $n \rightarrow \infty$. Assume further that $\sum_{k=1}^\infty k^2 n_k = O(n)$. 

Let $\tau_\infty \in(0, \infty]$ be the unique value such that 
\begin{equation}\label{eqn:tinfdef}
    d_1 \int_0^{\tau_\infty} \frac{e^{-2\s}}{\sum_k kp_k e^{-k\s} }d\s =1.
    \end{equation}
    Then w.h.p. the final size of the independent set produced by the process run on the random graph $\mc{H}(n, 2, \tbf{n})$ is 
    \begin{equation}\label{eqn:BJLresult}
       \rbrac{ d_1 \int_0^{\tau_\infty} \frac{e^{-2\s \sum_kp_k e^{-k\s}}}{\sum_k kp_k e^{-k\s} }d\s} n +o(n)
    \end{equation}
\end{theorem}
Of course the final size of the independent set on line \eqref{eqn:BJLresult} is asymptotically the same as that on line \eqref{eqn:finalindsize}. The forms these expressions take is an artifact of the methods used to analyze the processes. It seems rather unsatisfying to say the expressions are obviously equal because both theorems are true, so we will provide another calculation justifying that they are equal, at least in the fixed degree sequence case (so we do not have to worry about the convergence of $f$ as $n$ grows). Then note that we have $f(e^{-\s}) = \sum_k p_k e^{-k\s}  $ and similarly $\sum_k kp_k e^{-k\s}  = e^{-\s} f'(e^{-\s})$. Thus, the definition of $\tau_\infty$ in line \eqref{eqn:tinfdef} can be rewritten as
\[
d_1 \int_0^{\tau_\infty} \frac{e^{-\s}}{f'(e^{-\s}) }d\s =1
\]
and then, subsituting $u = 1-e^{-\s}$ and using the fact that $\g'(x) = -\frac{1}{f'(1-x)}$ we have
\[
d_1 \int_0^{1-e^{-\tau_\infty}} \frac{1}{f'(1-u) }du = -\g(1-e^{-\tau_\infty}) + \g(0) = 1.
\]
Since $\g(0)=1$ this implies that $\g(1-e^{-\tau_\infty})=0$ which can serve as an equivalent definition of $\t_\infty$. Meanwhile, the coefficient of $n$ in line \eqref{eqn:BJLresult} is equal to 
\[
d_1 \int_0^{1-e^{-\tau_\infty}} \frac{f(1-u)}{f'(1-u) }du.
\]
Substituting $u = \a(w)$ we get 
\[
\int_0^{\a^{-1}(1-e^{-\tau_\infty)}} dw = \a^{-1}(1-e^{-\tau_\infty}) = \a^{-1}(\g^{-1}(0)) = \b^{-1}(0),
\]
which is the coefficient of $n$ on line \eqref{eqn:finalindsize}.

\subsubsection{Regular case: Nie and Verstra\"ete} \label{sec:NV}

Here we relate our result to that of Nie and Verstra\"ete \cite{NV}. Let $H$ be a (deterministic) $\D$-regular $r$-uniform hypergraph with $n$ vertices. Let 
\begin{equation}\label{eqn:hdef}
    h(x) := 1- \sum_{n\ge 0}\binom{n+\D-2}{\D-2}\frac{x^{(r-1)n+1}}{(r-1)n+1}.
\end{equation}
Let
\begin{equation}\label{eqn:fdef}
f(\D, r):= \int_0^1 (1-h^{-1}(x)^{r-1})^\D\,dx.
\end{equation}
The main result in \cite{NV} is as follows.
\begin{theorem}[Nie, Verstra\"ete] \label{thm:NV}
There exists a function $\eps(\D, r, g)$ such that for any fixed $\D, r$ we have that $\eps(\D, r, g)\rightarrow 0$ as $g \rightarrow \infty$ with the following property. For $r, \D \ge 2$, $g \ge 4$, the expected size of the final independent set produced by the independent process is in the interval
\[
[(f(\D, r)- \eps)n, (f(\D, r)+ \eps )n]
\]
where $\eps = \eps(\D, r, g).$
\end{theorem}
Since random regular hypergraphs typically have few short cycles, we expect them to be similar to large girth regular hypergraphs. In particular, we expect that in the regular case $f(\D, r)$ should be equal to $\b^{-1}(0)$ (the coefficient of $n$ in \eqref{eqn:finalindsize}). In the rest of this subsection we justify that they are indeed equal. 

First we would like to ``simplify" $f(\D, r)$ somewhat. Using \eqref{eqn:hdef} and Newton's binomial formula we have
\begin{equation}\label{eqn:hprime}
    h'(x) =- \sum_{n\ge 0}\binom{n+\D-2}{\D-2}x^{(r-1)n} =- \frac{1}{(1-x^{r-1})^{\D-1}}.
\end{equation}

Now we turn to the definition of $f(\D, r)$ in \eqref{eqn:fdef} and make the substitution $u = h^{-1}(x)$. Then 
\[
du  = \frac{dx}{h'(u)} = -(1-u^{r-1})^{\D-1} dx.
\]
So we have
\begin{align}
    f(\D, r) &= \int_0^1 (1-h^{-1}(x)^{r-1})^\D\,dx = \int_{h^{-1}(0)}^{h^{-1}(1 )} -(1-u^{r-1})\; du \nonumber\\
    &= \int_{h^{-1}(0)}^{0} u^{r-1}-1\; du = h^{-1}(0) - \frac1r h^{-1}(0)^r.\label{eqn:fsimp}
\end{align}
It remains to show that the above is equal to $\b^{-1}(0)$.

We turn to the system \eqref{eqn:alphadiffeq} in the $\D$-regular case. We have $d_1=\D$ and $f(x) = x^\D$, so
\[
\a' = \frac{f'(1-\a^{r-1})}{d_1 f(1-\a^{r-1})} = \frac{\D(1-\a^{r-1})^{\D-1}}{\D (1-\a^{r-1})^\D} = \frac{1}{1-\a^{r-1}}.
\]
The above separable differential equation can be solved implicitly (using the initial condition $\a(0)=0$) to obtain 
\[
\a-\frac1r \a^r = t,
\]
or equivalently
\[
\a^{-1}(x) = x - \frac 1r x^r.
\]
Now note that $h$ and $\g$ are the same function. Indeed, we know that $h(0)=\g(0)=1$, and by equations \eqref{eqn:gammadef} and \eqref{eqn:hprime} we have that 
\[
\g'(x) =- \frac{d_1 }{f'(1-x^{r-1})} = - \frac{\D }{\D(1-x^{r-1})^{\D-1}} = h'(x).
\]
Now
\[
\b^{-1}(0) = \a^{-1}(\g^{-1}(0)) = \a^{-1}(h^{-1}(0)) = h^{-1}(0) - \frac1r h^{-1}(0)^r = f(\D, r)
\]
as desired. 

\subsubsection{Krivelevich, M\'esz\'aros, Michaeli and Shikhelman} \label{sec:KMMS}

Krivelevich, M\'esz\'aros, Michaeli and Shikhelman \cite{KMMS} gave a very general analysis of the independent process, showing in many cases (sequences of graphs) of interest  that the final size of the independent set can be approximated using an appropriate ``limiting" object (locally finite graph). While there is no formal statement or proof of a hypergraph analog of the result in \cite{KMMS}, the authors do show that, assuming such an analog is true, it implies something very similar to Nie and Verstra\"ete's result in \cite{NV}. Indeed, as noted in \cite{KMMS}, the local limit (see \cite{KMMS} for technical definitions) of a sequence of $r$-uniform $\D$-regular hypergraphs with girth tending to infinity is $\mathbb{T}_\D^r$, the infinite rooted $r$-uniform $\D$-regular loose tree. Informally this means that if we choose a  vertex $v$ in an $r$-uniform $\D$-regular hypergraph of girth say $3k$, the ball of radius $k$ around $v$ is isomorphic to the ball of radius $k$ around a vertex of $\mathbb{T}_\D^r$. Using the limiting object $\mathbb{T}_\D^r$ to set up a differential equation recovers the result of \cite{NV} (ignoring the precise error bound in the result). 

Similarly, we can obtain a result for matchings on $r$-uniform $\D$-regular hypergraphs of high girth using our Theorem \ref{thm:match} together with the result from Krivelevich, M\'esz\'aros, Michaeli and Shikhelman \cite{KMMS}. Indeed, for any hypergraph $\mc{H}$ we define the line graph $L(\mc{H})$ to have vertex set $E(\mc{H})$ where $e_1$ is adjacent to $e_2$ in $L(\mc{H})$ if and only if $e_1 \cap e_2 \neq \emptyset$. Thus a matching in $\mc{H}$ is precisely an independent set in $L(\mc{H})$. In fact, the matching process run on $\mc{H}$ is precisely the independent process on $L(\mc{H})$.

All of the applications given in \cite{KMMS} are to locally treelike graphs (i.e. graphs whose local limit is a tree). However $L(\mc{H})$ is typically not locally treelike (for example any vertex of degree $d$ in $\mc{H}$ gives us a $d$-clique in $L(\mc{H})$). Thus we cannot use Theorem 1.3 in \cite{KMMS} to analyze the independent process on $L(\mc{H})$. However, we can still use their Theorem 1.2 together with our own analysis to obtain a result. Let $\mc{H}_n$ be a sequence of $r$-uniform $\D$-regular hypergraphs with girth tending to infinity. It is trivial to check that, using the terminology from \cite{KMMS}, that $L(\mc{H}_n) \toloc (L(\mathbb{T}_\D^r), \rho)$ where $\rho$ is an arbitrary vertex of $L(\mathbb{T}_\D^r)$.
It is also not hard to see that $L(\mc{H}(n, r, \D)) \toloc (L(\mathbb{T}_\D^r), \rho)$. Thus, by Theorem 1.2 in \cite{KMMS}, w.h.p. the size of the final independent set when the process is run on $L(\mc{H}_n)$ is asymptotically the same as when the process is run on $L(\mc{H}(n, r, \D))$. In other words, w.h.p. the final matching produced by the matching process is asymptotically the same when run on $\mc{H}_n$ and $\mc{H}(n, r, \D).$ Now using our Theorem \ref{thm:match} we have Corollary \ref{cor:matchgirth}.

\section{Setting up the analysis for the processes} \label{sec:setup}

We generate our random hypergraph $\cH(n, r, \tbf{n})$ as follows. Suppose the degree sum is $n \sum_i i n_i = rm$ (this serves as a definition of $m$). We will have two sets of \tbf{points}: a set $A$ which is the union of $m$ disjoint sets of $r$ points each (these sets will be called the \tbf{edges}), and a set $B$ which is the union of disjoint sets of points (called \tbf{vertices}), where for each $0 \le i \le \D$ there are $n_i n$ vertices containing $i$ points. Note that $|A|=|B|=rm.$ We generate a uniform random \tbf{pairing} (a partition of $A \cup B$ into sets of size 2 each containing one element from $A$ and one from $B$).  We interpret the pairing as a hypergraph as follows: a vertex $v$ is contained in an edge $e$ if and only if there is a point in vertex $v$ that is paired with a point in the edge $e$.

We would like to restrict our attention to hypergraphs without ``loops" (i.e. edges that contain a vertex multiple times) or ``multi-edges" (i.e. two edges consisting of the same set of vertices). We call such hypergraphs \tbf{simple}. The main theorem in Blinovsky and Greenhill's paper \cite{BG16} can help us here. Let
\[
M_0=\sum_{i=1}^{\Delta} n_i,\quad M_1=\sum_{i=1}^\Delta in_i, \quad M_2 = \sum_{i=1}^\Delta i(i-1)n_i.
\]
The main result of \cite{BG16} implies that if $n, M_1 \rightarrow \infty$, $M_2 = O(M_1)$ and $\D = o\rbrac{M_1^{1/3}}$ then 
\[
P[\cH(n, r, \tbf{n}) \mbox{ is simple}] = \Omega(1)
\]

Note that we are not using use the most natural extension to hypergraphs of the standard configuration model (see e.g. \cite{BG16, Bol01, JLR00, Wor97}). In the standard configuration model we have only the set of points $B$ (there is no set $A$), which we randomly partition into sets of $r$ points each. That partition determines the edges of the hypergraph. But a moment's thought reveals that this model is equivalent to the one we use in the present paper (indeed, the neighborhoods of the edges in our set of points $A$ give us a uniform partition of $B$ into parts of $r$ points each). 
The reason for our choice of model is that we would like to analyze the processes step-by-step by revealing only a small part of the pairing at each step (often called the method of deferred decisions). In particular we would like to have the ability to reveal only part of an edge without revealing the entire edge (i.e. we reveal some but not all of the vertices in that edge). This is because in the independent process we are allowed to choose several vertices all in the same edge, so the method of deferred decisions guides us to reveal only what we need to know at the time, i.e. which edges the chosen vertex $v$ is in. It is not necessary to reveal an entire edge unless that edge has $r-1$ vertices in $I$ (in which case the last vertex in that edge must be removed from $V$). Our choice of model does not seem to have any advantage when analyzing the matching process, but for unity we use it there too since it is no more difficult. 

Each process (recall Algorithms \ref{algo:match} and \ref{algo:ind}) has a while loop, and we will call one iteration of the while loop a \tbf{step}, so in the matching process a step means choosing one edge to go into $M$, and in the independent process a step means choosing a vertex to go into $I$. We will break each step into several \tbf{pairings}, where in this context a pairing means revealing the partner of one point (we say that both points are paired after this).

\subsection{Matching process}
Our set up for the matching process is similar to that of Bohman and Frieze in \cite{BF11} where the Karp-Sipser algorithm is analyzed on random graphs with fixed degree sequence. To execute one iteration of the while loop in Algorithm \ref{algo:match}, we choose an edge $e \in E$ and then update $E$ by removing any edge $e'$ that intersects $e$. When we analyze the algorithm we will pair the points that are necessary in order to update $E$, plus a little bit more to aid the analysis. Recall that $E$ is a set of edges, and formally edges here are parts of the partition of $A$ (each of which consists of $r$ points). So we choose one random such edge $e$ and then update $E$ as follows. First we pair all the points in $e$ to see which vertices are incident with $e$. Then we pair all the points in those vertices since the partners of these points are in edges $e'$ that must be removed from $E$. We then pair all the points in any such edge $e'$ being removed from $E$, which will help us track random variables to analyze the process. 


At step $j$, say there are $Y_i = Y_i(j)$ many vertices with exactly $i$ unpaired points for $i=1, \ldots, \D$. In a slight abuse of notation we will let $E=E(j)$ be the number of edges in the set $E$ at step $j$ (we will often use the same notation for a set and its cardinality). 
Let 
\[
M_0=\sum_{i=1}^{\Delta} Y_i,\quad M_1=\sum_{i=1}^\Delta iY_i, \quad M_2 = \sum_{i=1}^\Delta i(i-1)Y_i.
\]
Then since the number of unpaired points in $A$ is always the same as in $B$ we have
\[r E(j) = \sum_{i = 1}^{\D}i Y_i(j) = M_1.\] 
We will now start to set up our application of Warnke's Theorem (see Theorem \ref{thm:Lutz} for a preview). We will write a system of differential equations intended to model the change in each of our variables over one step. We will assume that we are at a step $j$ where $M_1$ is still linear, i.e. $M_1 = \Omega(n)$.  For $i=1,\ldots, \D$, we have that the expected change in $Y_i$ over one step (conditional on the current values of the $Y_i$) is

\begin{align}
&\E\sqbs{Y_i(j+1)- Y_i(j)| \tbf{Y}(j)} \nonumber\\
&\qquad \qquad \qquad \qquad =-r \cdot \frac{iY_i}{M_1} +r\sum_{k=1}^{\D}\frac{k Y_k}{M_1}(k-1)(r-1)\of{ \frac{(i+1)Y_{i+1}}{M_1}-\frac{iY_i}{M_1} } + O\bfrac{\D^2}{n}\nonumber\\
& \qquad \qquad \qquad \qquad =-r \cdot \frac{iY_i}{M_1} +r(r-1)\frac{M_2}{M_1}\of{\frac{(i+1)Y_{i+1}}{M_1}-\frac{iY_i}{M_1} } + O\bfrac{\D^2}{n}\nonumber\\
& \qquad \qquad \qquad \qquad =  \rbrac{\frac{(i+1)r(r-1)M_2}{M_1^2}}Y_{i+1} - \rbrac{\frac{ir}{M_1} + \frac{ir(r-1)M_2}{M_1^2}}Y_{i} + O\bfrac{\D^2}{n}.\label{eq:mExpOneStep}
\end{align}
Indeed, we account for the terms on the second line as follows. When we choose our edge $e$ to go into $M$, we pair the $r$ points in $e$, each of which has a probability $\frac{i Y_i +O(\D)}{M_1 +O(\D)}= \frac{i Y_i }{M_1 } + O\bfrac{\D}{n}$ of being paired with a vertex in $Y_i$ (note that each step involves pairing at $O(\D)$ points). That explains the first term, so we move on to explain the second term. For each point in $e$ that is paired to a point in a vertex $v$ in say $Y_k$, we pair the remaining $k-1$ points in $v$ to determine which edges will be removed from $E$. Usually these $k-1$ points in $v$ will have partners that are in $k-1$ distinct edges (the probability that two would be in the same edge is $O(\D/n)$), so we proceed assuming this is the case. For each edge $e'$ being removed from $E$ we then pair the remaining $r-1$ points of $e'$. So our explanation for the second term is as follows. Each of the $r$ points in $e$ has a probability $\frac{k Y_k }{M_1 } + O\bfrac{\D}{n}$ of being paired to a point in a vertex $v$ in $Y_k$, and then with probability $1-O(\D/n)$ the other $k-1$ points in $v$ all have partners in distinct edges. We now pair the remaining $r-1$ points in each of these $k-1$ edges, and the expected effect of each such pairing on $Y_i$ is $ \frac{(i+1)Y_{i+1}}{M_1} -\frac{iY_i}{M_1} + O\bfrac{\D}{n}$. And so the second line is justified. The third line follows from the definition of $M_2$ and the fourth line is just algebra. 

As is usual in the differential equation method, we will prove that our random variables are concentrated around deterministic counterparts. Letting $t=j/n,$ we will heuristically assume (and later formally show) that $Y_i(j) \approx ny_i(t), M_k(j) \approx nm_k(t)$ for some deterministic functions $y_i(t), m_k(t)$. The expected one-step change in $Y_i$ \eqref{eq:mExpOneStep} gives us differential equations: 
\begin{equation}\label{eqn:ydiffeq}
  y_i' = \rbrac{\frac{(i+1)r(r-1)m_2}{m_1^2}}y_{i+1} - \rbrac{\frac{ir}{m_1} + \frac{ir(r-1)m_2}{m_1^2}}y_{i}  
\end{equation}
and initial conditions
\begin{equation}\label{eqn:ics}
y_i(0) = n_i/n,\quad i=1,\ldots, \D.
\end{equation}
where of course
\begin{equation}\label{eqn:mdef}
   m_0=\sum_{i=1}^{\Delta} y_i,\quad m_1=\sum_{i=1}^\Delta iy_i, \quad m_2 = \sum_{i=1}^\Delta i(i-1)y_i. 
\end{equation}

We will now describe the solution to the above system. It turns out that it can be expressed in terms of the following natural generating function: let
\[
f(x):= \sum_i \frac{n_i}{n} x^i, \qquad g(x):= \frac{1}{d_1^{r-1}} - \int_x^1 \frac{dz}{f'(z)^{r-1}}
\]
and note that $f(0)=0$ and $f$ is strictly increasing on $[0, 1]$, and so $g$ is strictly increasing on $(0, 1]$. In particular $f$ restricted to $[0,1]$ has an inverse. Let
\[
c(t):= f^{-1}(1-rt), \qquad a(t):= f'(c(t))^{r-1} g(c(t))
\]
\begin{claim}
The unique solution to the system \eqref{eqn:ydiffeq} with initial conditions \eqref{eqn:ics} is 
\[
y_i = \frac{a^i}{i!} f^{(i)}(c-a),\quad i=1, \ldots \D
\]
where $f^{(i)}$ denotes the $i^{th}$ derivative of $f$, and functions with suppressed input are evaluated at $t$ (e.g. $c-a$ means $c(t) - a(t)$). 
\end{claim}

\begin{proof}
We first check initial conditions. Note that $c(0) = f^{-1}(1) = 1$. Therefore \newline $a(0) = f'(1)^{r-1} g(1) = d_1^{r-1} \cdot \frac{1}{d_1^{r-1}} = 1$ as well. Thus, evaluating the proposed solution $y_i$ at $t=0$ gives 
\begin{equation*}
  y_i(0) = \frac{1}{i!} f^{(i)}(0) = \frac{n_i}{n}  
\end{equation*}
by the definition of $f$. Now we check the differential equation \eqref{eqn:ydiffeq}. First we establish some identities. If the $y_i$ are the proposed solution, then we have
\begin{equation*}
  m_1 = \sum_i i y_i = a \sum_i \frac{a^{i-1}}{(i-1)!} f^{(i)}(c-a) = af'(c)  
\end{equation*}
where the last inequality follows by noticing the Taylor series for $f'(x)$ centered at $c-a$. Similarly we have $m_2 = a^2 f''(c).$ Also note that 
\begin{equation}\label{eqn:cprime}
      c' = -\frac{r}{f'(c)}
\end{equation}
and since $g'(x) = \frac{1}{f'(x)^{r-1}}$ we have
\begin{align}
  a' &= (r-1)f'(c)^{r-2} f''(c) \cdot \rbrac{-\frac{r}{f'(c)}}g(c) + f'(c)^{r-1} \cdot \rbrac{\frac{1}{f'(c)^{r-1}}}\cdot \rbrac{-\frac{r}{f'(c)}}\nonumber\\
  &= -\frac{r}{f'(c)} - \frac{r(r-1)f''(c)}{f'(c)^2}\cdot a \label{eqn:aprime}
\end{align}
and so 
\begin{equation}\label{eqn:bprime}
  c' -a'= \frac{r(r-1)f''(c)}{f'(c)^2}\cdot a.
\end{equation}
Now to check \eqref{eqn:ydiffeq} we first take the derivative of the proposed solution:
\begin{equation}\label{eqn:yprime}
    y_i' = \frac{a^{i-1} a'}{(i-1)!} f^{(i)}(c-a) + \frac{a^i}{i!} f^{(i+1)}(c-a)(c'-a')
\end{equation}
and then we plug the proposed solution into the right side of \eqref{eqn:ydiffeq} to get
\begin{equation}\label{eqn:checkyprime}
    \frac{(i+1)r(r-1)f''(c)}{f'(c)^2} \cdot \frac{a^{i+1}}{(i+1)!} f^{(i+1)}(c-a) - \rbrac{\frac{ir}{af'(c)} + \frac{ir(r-1)f''(c)}{f'(c)^2}} \frac{a^i}{i!}f^{(i)}(c-a).
\end{equation}
The reader can use \eqref{eqn:cprime} and \eqref{eqn:aprime} to verify that \eqref{eqn:checkyprime} equals \eqref{eqn:yprime}.
\end{proof}

\subsubsection{Estimating the stopping point of the matching process}

In this section we estimate the number of steps taken before the process terminates, assuming that the random variables stay close to their trajectories. The process ends when $M_1(i)=0$ and we assume that $M_1(i) \approx m_1(t) n = a(t) f'(c(t)) n$, so we anticipate that the final value of $t$ is the smallest $t$ such that either $a(t)=0$ or $c(t)=0$ (since $f'(x)=0$ implies $x=0$). But $c(t)= f^{-1}(1-rt) = 0 $ implies that $1-rt = f(0) = 0$ so $t=1/r$. If this were the final value of $t$ it would mean we have an almost perfect matching, so we anticipate that the actual final value of $t$ is where $a(t)=0$. Indeed, note that $g(1)=1/d_1^{r-1}$ and $g'(x) = 1/f'(x)^{r-1} > 1/d_1^{r-1}$ for all $x>0$. Thus $g(x)=0$ for some value of $x>0$ (and this implies that $a(t)=0$ when $c(t)=x$ which happens before $c(t)=0$). Since $a(t) = f'(c)^{r-1}g(c)$ we see that $a(t)=0$ occurs for a smaller value of $t$ than $c(t)=0$. More specifically, $a(t)=0$ occurs when $g(c(t))=0$ which is when $t = c^{-1}(g^{-1}(0)) = (1-g^{-1}(0))/r$. Thus we anticipate that the process runs until time $$t_{end}:= (1-f(g^{-1}(0)))/r.$$ If this is the case the number of unsaturated vertices in the end will be about 
\[
n - rt_{end}n = f(g^{-1}(0)) n.
\]
 \subsection{Independent process}
One step in the independent process consists of the following. We choose the vertex $v$ to be inserted in our independent set $I$. Then we repeat the following for each  point $\hat{v}$ corresponding to $v$. 
We pair $\hat{v}$ and then if its partner is a point in an edge $e$ that now has all but one point paired to partners that are in the independent set and the remaining point of $e$ is unpaired, we then  pair the last unpaired point in that edge, whose partner is in a vertex that will now be \tbf{closed} (no longer eligible for the independent set). When a vertex becomes closed we pair all its points and then put all of the edges corresponding to their partners into a set $\mc{D}$ of \tbf{dead} edges (formally we do not delete the dead edges, since we would like to defer the pairing of all their points until it is necessary). Since each dead edge is incident with a closed vertex, the dead edge will never be the reason why another vertex becomes closed. The edges that are not dead are \tbf{live}.

We let $V_i$ be the set of vertices with $i$ unpaired points, and $L_i$ be the set of live edges with $i$ unpaired points. 


Let \[M_0=\sum_{i=1}^{\Delta} V_i,\quad M_1=\sum_{i=1}^\Delta iV_i, \quad M_2 = \sum_{i=2}^\Delta i(i-1)V_i.\]

We assume that we are at a step We claim that all $i=1\ldots, \Delta$, we have
\begin{align}\E\sqbs{V_i(j+1) - V_i(j)\,|\, \tbf{V}(j), \tbf{L}(j)} &= -\frac{V_i}{M_0} - \sum_{k=1}^\Delta\frac{V_k}{M_0} \cdot k\cdot \frac{2L_2}{M_1} \cdot \frac{i V_i}{M_1} +O\rbrac{\frac{\D^4}{n}}\nonumber\\
&= - \frac{V_i}{M_0}\of{1 + \frac{i\cdot 2L_2}{M_1}}+O\rbrac{\frac{\D^4}{n}} \label{eqn:viExpOneStep}.
\end{align}
We now explain the first line above. The first term corresponds to the event that the vertex $v$ chosen to go into $I$ is from $V_i$, which has probability $ \frac{V_i}{M_0}$. The second term represents the expected number of vertices in $V_i$ that become closed: for each $k$ we have a probability of $\frac{V_k}{M_0}$  that $v$ is a vertex in $V_k$, in which case we pair its $k$ points, each of which has a probability  $\frac{2L_2 + O(\D^2)}{M_1+O(\D^2)} = \frac{2L_2}{M_1} + O\rbrac{\frac{\D^2}{n}}$ of being partnered with a point in an edge that would cause a vertex closure (note that each step involves pairing at $O(\D^2)$ points). Each vertex closure has a probability $\frac{i V_i+O(\D^2)}{M_1+O(\D^2)} = \frac{iV_i}{M_1}+ O\rbrac{\frac{\D^2}{n}}$ of being a vertex in $V_i$. The second line above follows from $\sum_k k V_k = M_1$.


Now we claim that for all $i=1,\ldots r$, we have
\begin{align}
&\E\sqbs{L_i(j+1) - L_i(j)\,|\, \tbf{V}(j), \tbf{L}(j)}\nonumber\\
& \qquad= \sum_{k=1}^\Delta \frac{V_k}{M_0}\cdot k \cdot\of{\frac{(i+1)L_{i+1}}{M_1} - \frac{iL_i}{M_1} - \frac{2L_2}{M_1}\cdot\of{\sum_{r=1}^\Delta\frac{rV_r}{M_1}\cdot(r-1)\cdot\frac{iL_i}{M_1}  } } +O\rbrac{\frac{\D^4}{n}}\nonumber\\
& \qquad= \frac{(i+1)L_{i+1}}{M_0} - \frac{iL_i}{M_0}\cdot\of{ 1 + \frac{2L_2}{M_1}\cdot\frac{M_2}{M_1}   } +O\rbrac{\frac{\D^4}{n}}\nonumber\\
& \qquad= \rbrac{\frac{i+1}{M_0}}L_{i+1} - \of{ \frac{i}{M_0} + \frac{2iL_2M_2}{M_0M_1^2}   }L_i +O\rbrac{\frac{\D^4}{n}}.\label{eqn:lExpOneStep}
\end{align}
Indeed, to explain the second line note that for each $k$ we have  a probability $\frac{V_k}{M_0}$ that $v$ will be in $V_k$, meaning the $k$ points in $v$ would immediately be paired. Each point $\hat{v}$ of $v$ has a probability $\frac{(i+1)L_{i+1} +O(\D^2)}{M_1+O(\D^2)} = \frac{(i+1)L_{i+1} }{M_1}+O\rbrac{\frac{\D^2}{n}}$ of being paired to a point in an edge of $L_{i+1}$ which would then become an edge of $L_i$; and similarly $\hat{v}$ has a probability $\frac{iL_{i}+O(\D^2)}{M_1+O(\D^2)}=\frac{iL_{i}}{M_1}+O\rbrac{\frac{\D^2}{n}}$ of being paired to an edge in $L_i$ which then moves to $L_{i-1}$. We may also lose edges in $L_i$ due to vertex closures: each point $\hat{v}$ in $v$ has a probability $\frac{2L_2+O(\D^2)}{M_1+O(\D^2)} = \frac{2L_2}{M_1}+O\rbrac{\frac{\D^2}{n}}$ of triggering a vertex closure. For each vertex closure and $1 \le r \le \D$ there is a probability $\frac{rV_r+O(\D^2)}{M_1+O(\D^2)}=\frac{rV_r}{M_1}+O\rbrac{\frac{\D^2}{n}}$ that the closed vertex is in $V_r$, in which case we pair the other $r-1$ points in the closed vertex, each of which has a probability $\frac{iL_i+O(\D^2)}{M_1+O(\D^2)}=\frac{iL_i}{M_1} +O\rbrac{\frac{\D^2}{n}}$ of being in $L_i$. So the first line is explained. The second line follows from the definitions of $M_0, M_1, M_2$ and the third line is just algebra. 

Again letting $t=j/n,$ we will heuristically assume (and later formally show) that $L_i(j) \approx n\l_i(t), M_k(j) \approx nm_k(t)$ for some deterministic functions $\l_i(t), m_k(t)$. The expected one-step changes given in \eqref{eqn:viExpOneStep} and \eqref{eqn:lExpOneStep} give us differential equations: 
\begin{align}
    v_i' &= - \frac{v_i}{m_0}\of{1 + \frac{i\cdot 2\l_2}{m_1}}\label{eqn:vdiffeq}\\
    \l_i' &  = \rbrac{\frac{i+1}{m_0}}\l_{i+1} - \of{ \frac{i}{m_0} + \frac{2i\l_2m_2}{m_0m_1^2}   }\l_i\label{eqn:ldiffeq}
\end{align} with initial conditions
\begin{equation}\label{eqn:ics2}
v_i(0) = n_i/n, \quad \l_r(0) = d_1/r, \;\;\; \l_1(0)=\cdots=\l_{r-1}(0)=0
\end{equation}
where of course
\[
m_0=\sum_{i=1}^{\Delta} v_i,\quad m_1=\sum_{i=1}^\Delta iv_i, \quad m_2 = \sum_{i=1}^\Delta i(i-1)v_i.
\]

We will now describe the solution to the above system. Again we have attempted to simplify it as much as possible. We again let
\[
f(x):= \sum_i \frac{n_i}{n} x^i
\]
be the scaled degree generating function. Let $\a=\a(t)$ be the unique solution to the separable differential equation
\begin{equation}
   \a' = \frac{f'(1-\a^{r-1})}{d_1 f(1-\a^{r-1})}, \quad \a(0)=0 
\end{equation}
and let $\b=\b(t)$ be the solution to 
\begin{equation}\label{eqn:betadiffeq}
\b' = -\frac{1}{f(1-\a^{r-1})}, \quad \b(0)=1.
\end{equation}
 
\begin{claim}
The unique solution to the system \eqref{eqn:vdiffeq}, \eqref{eqn:ldiffeq} with initial conditions \eqref{eqn:ics2} is 
\begin{align}
    v_i &= \frac{n_i}{n}  (1-\a^{r-1})^i\b \label{eqn:vsol}\\
    \l_i &= \frac{\binom ri}{rd_1^{i-1}}  \a^{r-i} f'(1-\a^{r-1})^i \b^i
\end{align}
where functions with suppressed input are evaluated at $t$. 
\end{claim}

\begin{proof}
The proof follows by direct substitution to lines \eqref{eqn:vdiffeq} and \eqref{eqn:ldiffeq}. The LHS of \eqref{eqn:vdiffeq} evaluates to
\begin{align}
      &\frac{n_i}{n}\sbrac{\b'(1-\a^{r-1})^i - i(r-1)\a^{r-2}(1-\a^{r-1})^{i-1}\a'}  \nonumber\\
      & \qquad= \frac{n_i}{n}\sbrac{\of{-\frac{1}{f(1-\a^{r-1})}}(1-\a^{r-1})^i - i(r-1)\a^{r-2}(1-\a^{r-1})^{i-1}\of{\frac{f'(1-\a^{r-1})}{d_1 f(1-\a^{r-1})}}}\label{eqn:vcheck}
\end{align}

Meanwhile, substituting our proposed solution we find that 
\[
m_0 = \b f(1-\a^{r-1}), \quad m_1 = \b (1-\a^{r-1})f'(1-\a^{r-1}), \quad m_2 = \b (1-\a^{r-1})^2f''(1-\a^{r-1})
\]
and so substituting the proposed solution to the RHS of \eqref{eqn:vdiffeq} yields
\begin{equation*}
- \frac{\frac{n_i}{n}  (1-\a^{r-1})^i\b}{\b f(1-\a^{r-1})}\of{1 + \frac{i\cdot 2  \frac{\binom r2}{rd_1^{}}  \a^{r-2} f'(1-\a^{r-1})^2 \b^2 }{\b (1-\a^{r-1})f'(1-\a^{r-1}) }}
\end{equation*}
which after simplification matches line \eqref{eqn:vcheck}.

The LHS of \eqref{eqn:ldiffeq} evaluates to
\begin{align}
      & \frac{\binom ri}{rd_1^{i-1}} \Big[     (r-i)\a^{r-i-1} \a' f'(1-\a^{r-1})^i \b^i + \a^{r-i} i f'(1-\a^{r-1})^{i-1} f''(1-\a^{r-1}) (-\a^{r-2})\a' \b^i\nonumber\\
      & \hspace{10cm} + \a^{r-i} f'(1-\a^{r-1})^i i\b^{i-1}\b' \Big]\nonumber\\
            &= \frac{\binom ri}{rd_1^{i-1}} \Bigg[     (r-i)\a^{r-i-1} \of{\frac{f'(1-\a^{r-1})}{d_1 f(1-\a^{r-1})}}  f'(1-\a^{r-1})^i \b^i  \nonumber\\
      & \hspace{2cm}+ \a^{r-i} i f'(1-\a^{r-1})^{i-1} f''(1-\a^{r-1}) (-\a^{r-2})\of{\frac{f'(1-\a^{r-1})}{d_1 f(1-\a^{r-1})}} \b^i \nonumber\\
      & \hspace{5cm}
     + \a^{r-i} f'(1-\a^{r-1})^i i\b^{i-1}\of{ -\frac{1}{f(1-\a^{r-1})}} \Bigg]\nonumber\\
        &= \frac{\binom ri}{rd_1^{i-1}} \Bigg[      \frac{(r-i)\a^{r-i-1}\b^if'(1-\a^{r-1})^{i+1}}{d_1 f(1-\a^{r-1})}   -  \frac{i\a^{2r-i-2}\b^i  f'(1-\a^{r-1})^{i} f''(1-\a^{r-1})}{d_1 f(1-\a^{r-1})}  \nonumber\\
      & \hspace{5cm}
     - \frac{i\a^{r-i}\b^{i-1} f'(1-\a^{r-1})^i }{f(1-\a^{r-1})} \Bigg]\label{eqn:lcheck}
\end{align}

Meanwhile, substituting the proposed solution to the RHS of \eqref{eqn:ldiffeq} yields
\begin{align*}
&\rbrac{\frac{i+1}{\b f(1-\a^{r-1})}}\of{\frac{\binom r{i+1}}{rd_1^{i}}  \a^{r-i-1} f'(1-\a^{r-1})^{i+1} \b^{i+1}} \\
&- \of{ \frac{i}{\b f(1-\a^{r-1})} + \frac{2i \of{  \frac{\binom r2}{rd_1^{}}  \a^{r-2} f'(1-\a^{r-1})^2 \b^2} \cdot \b (1-\a^{r-1})^2f''(1-\a^{r-1})}{\b f(1-\a^{r-1}) \cdot \big(\b (1-\a^{r-1})f'(1-\a^{r-1})\big)^2}   }\frac{\binom ri}{rd_1^{i-1}}  \a^{r-i} f'(1-\a^{r-1})^i \b^i
\end{align*}
which after simplification matches line \eqref{eqn:lcheck}.
\end{proof}

\subsubsection{Estimating the stopping point of the independent process}

The process ends when $M_0(i)=0$ and we know that $M_0(i) \approx m_0(t) n = \b(t) f(1-\a(t)^{r-1})n$, so we anticipate that the final value of $t$ is the smallest $t$ such that either $\b(t)=\g(\a(t))=0$ or $\a(t)=1$ (since $f(0)=0$). So the question is which of $\b^{-1}(0)=\a^{-1}(\g^{-1}(0))$ and $\a^{-1}(1)$ is smaller. Note that since $f'(1) = d_1$ and $f'$ is strictly increasing, by \eqref{eqn:gammadef} we have that $\g(1)<0$. Since $\g$ is decreasing this means that $\g^{-1}(0) < 1$. Since $\a$ is increasing we have that $\a^{-1}$ is also increasing and so $\a^{-1}(\g^{-1}(0)) < \a^{-1}(1)$. Thus we anticipate that the process runs until time $$t_{end}:=\b^{-1}(0) $$ and outputs an independent set of size roughly $\b^{-1}(0)n$.

\section{Dynamic concentration and final proofs}\label{sec:conc}

\begin{theorem}[Warnke]\label{thm:Lutz} Let $\nu, n>1$ be integers. Let $\mc{D} \subseteq \mathbb{R}^{\nu+1}$ be a connected and bounded open set and let $\mc{D}^+$ be the set of points in $\mc{D}$ with only nonnegative coordinates. Let $(F_i)_{1 \le i \le \nu}$ be functions with $F_i:\mc{D}^+ \rightarrow \mathbb{R}$. Let  $\mc{F}_0 \subseteq \mc{F}_1 \subseteq \ldots$ be $\sigma$-fields. Suppose that the random variables $((Y_i(j))_{1 \le i \le \nu}$ are nonnegative and $\mc{F}_j$-measurable for $j>0$. Furthermore, assume that, for all $j>0$ and $1 \le i \le \nu$, the following conditions hold whenever $(j/n, Y_1(j)/n, ..., Y_\nu(j)/n)\in \mc{D}^+$ 
\begin{enumerate}[(i)]
    \item\label{trendLipschitz} $|\Mean{Y_i(j+ 1)-Y_i(j)| \mc{F}_j}-F_i(j/n, Y_1(j)/n, ..., Y_\nu(j)/n)|\le \delta$, where the function $F_i$ is $L$-Lipschitz-continuous on $\mc{D}^+$ (the `Trend hypothesis' and `Lipschitz hypothesis'),
    
    \item $|Y_i(j+1)-Y_i(j)| \le \theta$ (the `Boundedness hypothesis'), and that the following condition holds initially:
    
    \item $\max_{1 \le i \le \nu}|Y_i(0)-\hat{y}_i n| \le \lambda n$ for some $(0,\hat{y}_1,\ldots, \hat{y}_\nu)\in \mc{D}^+$ (the `Initial condition').
\end{enumerate}
Then there are $R=R(\mc{D},(F_i)_{1 \le i \le \nu}, L)\in [1,\infty)$ and $T=T(\mc{D})\in(0,\infty)$ such that, whenever $\lambda > \delta \min\{T, L^{-1}\}+R/n$, with probability at least $1-2 \nu \exp\{-n\lambda^2 /(8T \theta^2)\}$ we have 
\[
\max_{0 \le j \le \sigma n} \max_{1 \le i \le \nu}|Y_i(j)-y_i(j/n)n|<3\exp\{LT\}\lambda n
\]
where $(y_i(t))_{1 \le i \le \nu}$ is the unique solution to the system of differential equations $y'_i(t) =F_i(t, y_1(t), . . . , y_\nu(t))$ with $y_i(0) = \hat{y}_i$ for $1 \le i \le \nu$, and $\sigma=\sigma(\hat{y}_1, . . .\hat{y}_a)\in[0, T]$ is any choice of $\sigma>0$ with the property that $(t, y_1(t), . . . y_\nu(t))$ has $\l^\infty$-distance at least $3\exp\{LT\}\lambda$ from the boundary of $\mc{D}$ for all $t\in[0, \sigma)$.
\end{theorem}

\begin{remark}
The above is a very minor adaptation of Warnke's theorem in \cite{Lutz}. Here we assume that the random variables $Y_i(j)$ are nonnegative, which \cite{Lutz} does not. Also \cite{Lutz} does not have $\mc{D}^+$ (all instances of ``$\mc{D}^+$" are originally ``$\mc{D}$" in \cite{Lutz}). It is not difficult to see that the proof in \cite{Lutz} works for the present adaptation. We use this adaptation because we would like to track random variables that might be 0, meaning that our set $\mc{D}$ must include points with coordinates equal to 0, and since $\mc{D}$ is open it must then also include points with negative coordinates. But we would rather not (for example) consider what the one-step change in a $Y_i$ variable should be if it is negative. 
\end{remark}
\subsection{Proof of Theorem \ref{thm:match}}

\begin{proof}[Proof of Theorem \ref{thm:match}]
We apply Theorem \ref{thm:Lutz}. For this application we have $\nu=\D$ and by equation \eqref{eq:mExpOneStep}
\[F_i(t, y_1, \ldots, y_\D) = \rbrac{\frac{(i+1)r(r-1)m_2}{m_1^2}}y_{i+1} - \rbrac{\frac{ir}{m_1} + \frac{ir(r-1)m_2}{m_1^2}}y_{i}\]
where $m_1, m_2$ are still as defined in \eqref{eqn:mdef} and $y_{\D+1}$ is always 0. For our initial condition we have $\hat{y}_i = n_i/n$. We will let the region $\mc{D}=\mc{D}(\eps)$ be 
\[ \mc{D}:= \left\{(t, y_1, \ldots y_\D) \in \mathbb{R}^{\D+1}: \;\;\;\;-\eps < t < \frac 2r ,\;\;\;\;  y_i > - \eps , \;\;\;\; \eps < m_1 < 2 \sum_{i=1}^\D i n_i/n \right\}\]
which is clearly open, bounded and connected, and for $\eps$ small enough contains the point $(0, n_1/n, \ldots n_\D/n)$ (where we have $m_1 = \sum_{j = 1}^{\D}i n_i/n$). 
Note that (see condition \eqref{trendLipschitz} in Theorem \ref{thm:Lutz} and equation \eqref{eq:mExpOneStep}) we may use $\d = O(\D^2 / n) = o(n^{-1/3})$.
Since $F_i$ can be written as a rational function whose denominator is $m_1^2$, we can take $R = O(1/m_1^2) = O(1)$. Also note that $F_i$ is $L$-Lipschitz continuous for $L= O(1/m_1^3 ) = O(1)$ on $\mc{D}^+$. Since the number of vertices incident with edges revealed in any single step is at most $2\D-1$, we can use $\theta=O(\D)$. We use $T = 2/r = O(1)$. Thus, Theorem \ref{thm:Lutz} gives us a failure probability of at most 
\[
2 \nu \exp\{-n\lambda^2 /(8T \theta^2)\} = 2 \D\exp\{-\Omega\rbrac{n\lambda^2 /\D^2}\} =o(1)
\]
assuming we choose, say $\lambda = n^{-1/10}.$

Furthermore, we can see that the only way the solution to the system of differential equations can ever leave $\mc{D}$ is at a point where $m_1 = \sum_{i = 1}^{\D}i z_i = \eps$. Indeed, since $a$ and $c$ are decreasing and $a(0) = c(0)=1$ we have that $a(t), c(t) \le 1$ for all $t$ and so 
\[m_1(t) = a(t)f'(c(t)) \le 1 \cdot f'(1) = \sum_{i=1}^\D i n_i/n. \]
Also, clearly $y_i \ge 0$ for all $i$. Thus, the only inequality defining $\mc{D}$ that our solution can ever fail to satisfy is $\eps \le rm_1$.
We apply Theorem \ref{thm:Lutz}, and conclude that our discrete random variables $Y_i(j)$ are well approximated by their continuous counterparts $ny_i(j/n)$, for all values of $j \le t_{\eps} n$, where  $t_\eps$ is the value of $t$ such that $m_1(t) = \eps$ (i.e. for values of $j$ corresponding to points in the region $\mc{D} $ defined above). Note that
\begin{align*}
m_1' &= a'f'(c) + af''(c)c' \le -r
\end{align*}
by equations \eqref{eqn:cprime} and \eqref{eqn:aprime}. 
  Since $m_1(t_{end})=0$ we have 
\[t_{end} -  \frac{\eps}{r}\le t_\eps \le t_{end} .\]
We use this to bound the final size of the matching. If we run the process to step $j_{\eps}:=t_{\eps} \cdot n$ then by Theorem \ref{thm:Lutz} whp we arrive at some configuration with $rM(j_{\eps}) = \eps n (1+o(1))$ many points. At this point our matching already has  $j_{\eps} = t_\eps n \ge (t_{end} - \eps)n$ many edges. Also, even if every edge remaining is added to our matching, the final matching will have only $\eps n (1+o(1))$ more edges. Thus the maximum possible number of edges is $\of{t_{end} + \eps+o(1)}n \le (t_{end} + 2\eps)n$.
Altogether the final matching w.h.p. has between $(t_{end} - \eps)n$ and $(t_{end} + 2\eps)n$ many edges. Since $\eps>0$ is arbitrary we are done. 
\end{proof}

\subsection{Proof of Theorem \ref{thm:ind}}

\begin{proof}[Proof of Theorem \ref{thm:ind}]
We apply Theorem \ref{thm:Lutz}. Since we have two types of variables ($V_i$ and $L_i$), when we check condition \eqref{trendLipschitz} we will use functions $F_{v_i}(t, v_1, \ldots, v_\D, \l_1, \ldots, \l_r)$ and $F_{\l_i}(t, v_1, \ldots, v_\D, \l_1, \ldots, \l_r)$ to represent the approximate one-step change in $V_i$ and $L_i$, respectively. For this application we have by equation \eqref{eqn:viExpOneStep}
\[F_{v_i}(t, v_1, \ldots, v_\D, \l_1, \ldots, \l_r) = - \frac{v_i}{m_0}\of{1 + \frac{i\cdot 2\l_2}{m_1}}\]
where $m_0, m_1, m_2$ are still as defined in \eqref{eqn:mdef} and $\l_{r+1}$ is always 0. By equation \eqref{eqn:lExpOneStep} we have
\[F_{\l_i}(t, v_1, \ldots, v_\D, \l_1, \ldots, \l_r) = \rbrac{\frac{i+1}{m_0}}\l_{i+1} - \of{ \frac{i}{m_0} + \frac{2i\l_2m_2}{m_0m_1^2}   }\l_i.\]
 For our initial condition we have by \eqref{eqn:ics2} that $\hat{v}_i = n_i/n,\; \hat{\l}_r = d_1/r$, and  $\hat{\l}_1=\ldots=\hat{\l}_{r-1}=0$. We will let the region $\mc{D}=\mc{D}(\eps)$ be 
\[\left\{(t, v_1, \ldots, v_\D, \l_1, \ldots, \l_r) \in \mathbb{R}^{\D+r+1}: \;\;-\eps < t <  2 ,\;\;  v_i > - \eps , \;\; -\eps < \l_i < \frac{d_1}{r}, \;\;\;\; \eps < m_0 < 2  \right\}\]
which is clearly open, bounded and connected, and for $\eps<1$ contains the point \newline $(0, n_1/n, \ldots n_\D/n, 0, \ldots, d_1/r)$ (where we have $m_0 = 1$). 

Note that (see condition \eqref{trendLipschitz} in Theorem \ref{thm:Lutz} and s \eqref{eqn:viExpOneStep}, \eqref{eqn:lExpOneStep}) we may use $\d = O(\D^4/n)$. For the Lipschitz condition, note that $m_0 \le m_1 \le \D m_0$ and that $F_{v_i}, F_{\l_i}$ can be written as rational functions whose denominator is $m_0m_1^2\ge \eps^3$ on $\mc{D}$. Thus $F_i$ is $L$-Lipschitz continuous (for some $L=L(\eps) = O(1)$) on $\mc{D}$. We have $|V_i(j+1) - V_i(j)| = O(\D)$ since at most one vertex is chosen for the independent set and at most $\D$ become closed in one step. Similarly we have $|L_i(j+1) - L_i(j)| = O(\D^2)$, and so we will use $\theta = O(\D^2)$. 
Thus, Theorem \ref{thm:Lutz} gives us a failure probability of at most 
\[
2 \nu \exp\{-n\lambda^2 /(8T \theta^2)\} = O( \D)\exp\{-\Omega\rbrac{n\lambda^2 /\D^2}\} =o(1)
\]
assuming we choose, say $\lambda = 1/ \log n.$

Furthermore, we can see that the only way the solution to the system of differential equations can ever leave $\mc{D}$ is at a point where $m_0 = \sum_{i = 1}^{\D} v_i = \eps$. Indeed, the $v_i$ are always initially nonnegative and $v_\D$ is initially positive. By equation \eqref{eqn:vdiffeq}, each $v_i$ is nonincreasing as long as they are all nonnegative and $v_\D$ is positive. Let $t_{\eps}$ be the smallest value of $t$ such that $m_0(t)=\eps$.

Note that when $m_0  = \b f(1-\a^{r-1}) = \eps$, since $\a$ is increasing and $f$ is increasing, that $f(1-\a^{r-1}) \ge \k:= f(1-\g^{-1}(0)^{r-1})$ and $\k$ is positive and depends only on the initial degree distribution $\tbf{n}$. Therefore $\b \le \eps / \k$ and since $\b = \g \circ \a$ is decreasing, we have $t_{\eps} \ge \b^{-1} \rbrac{ \frac{\eps}{\k}}$. Now since 
\[
\b' = -\frac{1}{f(1-\a^{r-1})} \in \left[ -\frac{1}{\k}, 0 \right)
\]
we have $(\b^{-1})'  \in \left[-\k, 0 \right)$ and so 
\[
t_{end} \ge t_{\eps} \ge \b^{-1}\rbrac{\frac \eps \k} \ge \b^{-1}(0)   - \frac{\eps}{\k} \cdot \k = t_{end} - \eps.
\]
Thus w.h.p. the process lasts at least until step $j_\eps := t_\eps n$, at which point there are $(1+o(1))\eps n$ open vertices. At that point our independent set already has size $t_\eps n \ge (t_{end} - \eps)n$, and even if all open vertices were ultimately added to the independent set, its final size would be at most $(t_\eps + 2\eps)n \le (t_{end}+ \eps)n$. Thus w.h.p.  the final independent set has size between $(t_{end} - \eps) n$ and $(t_{end} + \eps) n$ and we are done. 
\end{proof}

\section{Upper bounds for the regular case}\label{sec:UBs}

In this section we provide some upper bounds for the likely size of the largest matching and largest independent set in a random regular hypergraph. These bounds follow from the first moment method.

\subsection{Matchings}

Let $X$ be the number of matchings of size $c n$ in the hypergraph $H \sim \mc{H}(n, r, \D)$. Then we have

\begin{align}
    \E[X] &=  \binom{\frac{\D n}{r}}{cn} \frac{(n)_{rcn}\D^{rcn} (\D n - r c n)!}{(\D n)!}. \label{eqn:exmatch}
\end{align}
Indeed, $\binom{\frac{\D n}{r}}{cn}$ is the number of ways to choose our $cn$ edges, and the probability that set of edges forms a matching is calculated as follows: the $r c n$ points in our matching edges must choose distinct vertices to pair with (this can be done in $(n)_{rcn}$ ways); then in each of the aforementioned vertices we must choose one of its $\D$ points to pair with the point from the matching edge, which can be done in $\D^{crn}$ ways; then we pair the rest of the points, and finally divide by the total number of ways we could have paired all the points. 

Now by Stirling's formula we have $x! = \rbrac{ \frac{x}{e}}^x \cdot \exp\{o(x)\}$ as $x\rightarrow \infty$. We let 
\[
h(x) := \left\{\begin{array}{lr}
        x \log x, &  x>0\\
        0, &  x=0.
        \end{array}\right.
\]
Thus \eqref{eqn:exmatch} becomes
\begin{align}
    & \frac{\rbrac{\frac{\D n}{r}} ! \;n!\;\D^{rcn} \;(\D n - r c n)!}{(cn)! \;\rbrac{\frac{\D n}{r} - cn} !\; (n-rcn)! \;(\D n)!} = \frac{\rbrac{\frac{\D n}{er}}^{\frac{\D n}{r}}  \rbrac{\frac{n}{e}}^n \D^{rcn}  \rbrac{\frac{\D n - r c n}{e}}^{\D n - r c n}}{ \rbrac{\frac{c n}{e}}^{cn}  \rbrac{\frac{\frac{\D n}{r} - cn}{e}}^{\frac{\D n}{r} - cn}   \rbrac{\frac{n-rcn}{e}}^{n-rcn}  \rbrac{\frac{\D n}{e}}^{\D n}   }  \exp\{o(n)\} \nn\\
     = & \exp\cbrac{ \sbrac{h\rbrac{\frac{\D}{r}}  + rc \log \D + h(\D  - rc) - h(c) - h\rbrac{\frac{\D }{r} - c}  - h(1-rc) - h(\D )}n +o(n)}
\end{align}
To see the second line,  note that we can cancel a large power of $n/e$. Thus $\E[X]$ goes to 0 so long as we choose $c$ such that 
\[
h\rbrac{\frac{\D}{r}}  + rc \log \D + h(\D  - rc) - h(c) - h\rbrac{\frac{\D }{r} - c}  - h(1-rc) - h(\D ) <0.
\]
At this point we must resort to numerical methods. For a few values of $r, \D$ we provide in Table \ref{tbl2} some upper bounds on the maximum matching size in $\mc{H}(n, r, \D)$ that were verified using the above inequality in Maple. 

The tables below give bounds (which hold w.h.p.) on the maximum matching in $\mc{H}(n, r, \D)$ for a few small values of $r, \D$. By Corollary \ref{cor:matchgirth} these bounds also hold for hypergraphs of high girth. In Table \ref{tbl1}, the $(r, \D)$ entry $b$ indicates that for any sequence of $r$-uniform $\D$-regular hypergraphs $\mc{H}_n$ with $n$ vertices, $\mc{H}_n$ has a matching that leaves at most $bn+o(n)$ vertices unmatched, given by the matching process.

\begin{table}[h!]
\centering
\begin{tabular}{ |c|c|c|c|c| } 
 \hline
  & $\D=2$ & $\D=3$ & $\D=4$ & $\D=5$ \\ 
 \hline
 $r=3$ & 0.250 & 0.250 & 0.239 & 0.227 \\ 
 \hline
 $r=4$ & 0.334 & 0.342 & 0.334 & 0.324\\ 
 \hline
 $r=5$ & 0.397 & 0.411 & 0.406 & 0.397 \\ 
 \hline
\end{tabular}
\caption{Upper bounds}\label{tbl1}
\end{table}
In Table \ref{tbl2} the $(r, \D)$ entry $b$ indicates that w.h.p. any matching in $\mc{H}(n, r, \D)$ must leave at least $bn+o(n)$ vertices unmatched. Note that this implies there exist $r$-uniform $\D$-regular hypergraphs of arbitrarily high girth such that every matching leaves $bn+o(n)$ vertices unmatched.

\begin{table}[h!]
\centering
\begin{tabular}{ |c|c|c|c|c| } 
 \hline
  & $\D=2$ & $\D=3$ & $\D=4$ & $\D=5$ \\ 
 \hline
 $r=3$ & 0.081 & 0.052 & 0.029 & 0.012 \\ 
 \hline
 $r=4$ & 0.158 & 0.138 & 0.116 & 0.096\\ 
 \hline
 $r=5$ & 0.222 & 0.211 & 0.192 & 0.174\\ 
 \hline
\end{tabular}
 \caption{Lower bounds}\label{tbl2}
\end{table}

\subsection{Independent sets}

First we discuss the possibility that $\mc{H}(n, r, \D)$ has an independent set matching a certain trivial upper bound. In particular, note that for $\D \ge 1$, any independent set in a $r$-uniform $\D$-regular hypergraph has size at most $\frac{r-1}{r}n$. Indeed, the complement of an independent set is a vertex-covering of the edges. Since each vertex covers at most $\D$ edges and there are $n\D/r$ edges to cover, any vertex-cover must have at least $n/r$ vertices. 

Now we will see that for certain $r, \D$ that w.h.p. $\mc{H}(n, r, \D)$ has an independent set of size $\frac{r-1}{r}n$ (we assume $r$ divides $n$ of course). We accomplish this by showing that there is a vertex-cover with $n/r$ vertices. To see this, recall that our random hypergraph model is generated from a random pairing of the points in $A$ to the points in $B$, where $A$ (resp. $B$) is the union of disjoint sets we interpret as edges (resp. vertices). Note that we may swap the roles of $A$ and $B$ to obtain $\mc{H}(n\D/r, \D, r)$ instead of $\mc{H}(n, r, \D)$. Now a vertex cover of size $n/r$ in $\mc{H}(n, r, \D)$ corresponds to a set of $n/r$ edges in $\mc{H}(n\D/r, \D, r)$ covering all the vertices, i.e. a perfect matching.  Thus for fixed $r, \D$, it holds that w.h.p. $\mc{H}(n, r, \D)$ has an independent set of size $\frac{r-1}{r}n$ if and only if it holds that w.h.p. $\mc{H}(n\D/r, \D, r)$ has a perfect matching. Now we appeal to the result of Cooper, Frieze, Molloy and Reed \cite{CFMR96} (see equation \eqref{eqn:CFMR}) to obtain the following corollary:

\begin{corollary}[Corollary to \cite{CFMR96}] For $r \ge 2, \D \ge 3$ we have
\begin{equation}
    \lim_{n\to\infty}\P\sqbs{\a(\cH(n,r,\D)) = \frac{r-1}{r}n } = 
\begin{cases}
1 \quad \textrm{if $\D < \sigma_r$} \\
0 \quad \textrm{if $\D > \sigma_r$}
\end{cases}
\end{equation}
where $\s_r := \frac{\log r }{(r -1)\log\bfrac{r}{r-1}} +1$.
\end{corollary}
Note, for example that w.h.p. $\a(\cH(n,r,\D)) = \frac{r-1}{r}n$ when $\D=2$ and $r \ge 3$, or when $\D=3$ and $r \ge 7$.

Now we address the cases where w.h.p. $\a(\cH(n,r,\D)) < \frac{r-1}{r}n$. We will use the first moment method. Let $Y$ be the number of independent sets of size $c n$ in $ \mc{H}(n, r, \D)$. Then we have

\begin{align}
    \E[Y] &=   \sum_{\substack{s_0 + \ldots + s_{r-1} = \frac{\D n}{r} \\ s_1 + 2s_2 + \ldots + (r-1)s_{r-1} = \D cn}} \binom{n}{cn}  \binom{\frac{\D n}{r}}{s_0, \ldots, s_{r-1}} \sbrac{ \prod_{0 \le j \le r-1} \binom{\D}{j}^{s_j}} \frac{(\D cn)!(\D n - \D cn)!}{(\D n)!}\label{eqn:exind}
\end{align}
Indeed, $\binom{n}{cn}$ is the number of ways to choose our $cn$ vertices, and the probability that set of vertices forms an independent set is calculated as follows. Let $S_j$ be the set of edges containing exactly $j$ vertices from the independent set, and $s_j = |S_j|$. So of course $S_{r}=\emptyset$, the total number of edges is $s_0 + \ldots + s_{r-1} = \frac{\D n}{r}$ and the number of points in the independent set is $s_1 + 2s_2 + \ldots + (r-1)s_{r-1} = \D cn$. We designate our edges in $\binom{\frac{\D n}{r}}{s_0, \ldots, s_{r-1}}$ ways. For each $j$ and each edge in $S_j$ we then choose $j$ of its points to be paired to points in the independent set, accounting for the  $\prod_{0 \le j \le r-1} \binom{\D}{j}^{s_j}$. Once that is done we just have to pair the $\D cn$ points in our independent set to the appropriate points on the other side, and then pair off the other $\D n - \D cn$ vertex points. 

The number of terms in \eqref{eqn:exind} is only polynomial, so the sum will go to 0 if each term is exponentially small. We fix some $s_0, \ldots s_{r-1}$ as described in the sum, and we let $x_j := s_j/n$. Using Stirling's formula, the corresponding term is 
\begin{align}
   & \frac{n!\; \rbrac{\frac{\D n}{r}}! \; (\D cn)! \; (\D n - \D cn)!}{(cn)!\; (n-cn)! \; s_0! \ldots s_{r-1}! \; (\D n)!} \prod_{0 \le j \le r-1} \binom{r}{j}^{s_j}\nn\\
   = & \exp\left\{ \left[ h\rbrac{\frac{\D}{r} } + h(\D c) + h(\D(1-c)) + x_1 \log\binom{r}{1} + \ldots x_{r-1} \log\binom{r}{r-1}  \right. \right. \nn\\
   & \qquad \qquad  - h(c) -h(1-c) - h(x_0) - \ldots - h(x_{r-1}) - h(\D)  \bigg] n + o(n) \bigg\} \label{eqn:indmax}
\end{align}
where we have used Stirling's formula, cancelled a large power of $n/e$. We let 
\[
f(x_0, \ldots, x_{r-1}) : = h\rbrac{\frac{\D}{r}}  + h(\D c) + h(\D(1-c)) - h(c) -h(1-c) - h(\D) -\sum_{0 \le j \le r-1}  x_j \log \frac{x_j}{\binom{r}{j}}
\]
be the coefficient of $n$ in the exponent of \eqref{eqn:indmax}. Let us summarize what we know so far. If for some fixed $\D, r, c$ we have that $f(x_0, \ldots, x_{r-1})<0$ for all $x_0, \ldots x_{r-1}$ satisfying
\begin{equation}\label{eqn:constraint1}
 x_j \ge 0, \qquad   x_0 + \ldots + x_{r-1} = \frac{\D}{r}, \qquad x_1 + 2x_2 + \ldots + (r-1) x_{r-1} = \D c
\end{equation}
then w.h.p. $\alpha(\mc{H}(n, r, \D))< cn$. Thus we are interested in maximizing $f$ subject to \eqref{eqn:constraint1}. 

\begin{claim}\label{clm:indupper}
Fix $r \ge 3$, $\D \ge 2$, and $0 < c < \frac{r-1}{r}.$ The maximum of $f$ subject to  \eqref{eqn:constraint1} is given by 
\[
h\rbrac{\frac{\D}{r}}  + h(\D c) + h(\D(1-c)) - h(c) -h(1-c) - h(\D)  - \frac{\D}{r} \log z_1 - c \D \log z_2
\]
where $z_2$ is the unique positive number such that 
\[
\frac{z_2  \sbrac{\rbrac{z_2 + 1}^{r-1} - z_2^{r-1}}}{\rbrac{z_2 + 1}^r - z_2^r} = c
\]
and 
\[
z_1 = \frac{\D}{r\sbrac{\rbrac{z_2 + 1}^r - z_2^r}}.
\]
\end{claim}

\begin{proof}
First we prove that the maximum cannot occur at a point that has $x_k=0$ for any $k$. Consider an arbitrary point $(x_0, \ldots x_{r-1})$ satisfying \eqref{eqn:constraint1} and with $x_k=0$. Note that we must have at least one positive $x_i$. We consider two cases:
\begin{enumerate}
    \item [(i)] there exist two distinct positive entries $x_\l, x_m >0$
    \item[(ii)] we only have one positive $x_\l$ and $x_j=0$ for all $j \neq \l$.
\end{enumerate}
In case (i), we let $\vec{x} = (x_0, \ldots x_{r-1})$ and let $\vec{y} = (y_0, \ldots, y_{r-1})$ where $ y_k = 1, y_\l = \frac{k-m}{m-\l}, y_m = \frac{\l-k}{m-\l}$ and all other coordinates are 0. Since $y_k$ is positive, for $t>0$ small enough we have that $\vec{x} + t\vec{y}$ satisfies the constraints \eqref{eqn:constraint1}. Now note that for $t>0$ small enough
\begin{align*}
    f(\vec{x} + t\vec{y}) - f(\vec{x})& = x_i \log \frac{x_i}{\binom{r}{i}} + x_j \log \frac{x_j}{\binom{r}{j}} - (x_i +ty_i) \log \frac{x_i+ty_i}{\binom{r}{i}} - (x_j +ty_j) \log \frac{x_j+ty_j}{\binom{r}{j}} - t \log\frac{t}{\binom{r}{k}}\\
    & = O(t) - t \log t >0 
\end{align*}
where the last line follows from Lipschitz continuity of the terms $x_i \log \frac{x_i}{\binom{r}{i}}$ and $x_j \log \frac{x_j}{\binom{r}{j}}$. We conclude that $\vec{x}$ is not optimal. 

In case (ii), the constraints \eqref{eqn:constraint1} give us 
\[
x_\l = \frac{\D}{r}  = \frac{\D c}{\l}
\]
so that $c = \frac{\l}{r}$. Since we assume $0 < c < \frac{r-1}{r}$ we have $\l \neq 0, r-1$. We now argue that $\vec{x}$ is nonoptimal similarly to case (i). Choose $m, k$ such that $0 \le m < \l < k \le r-1$, and  again let $\vec{y} = (y_0, \ldots, y_{r-1})$ where $ y_k = 1, y_\l = \frac{k-m}{m-\l}, y_m = \frac{\l-k}{m-\l}$ and all other coordinates are 0. Now since both $y_k, y_m$ are positive, for $t>0$ small enough we have that $\vec{x} + t\vec{y}$ satisfies the constraints \eqref{eqn:constraint1}. Similarly to case (i) we find that $f(\vec{x} + t\vec{y})>f(\vec{x})$ for small positive $t$ as well. Thus, the maximum cannot occur at  any point with any zero coordinates. 

We use the method of Lagrange multipliers. At any maximum of $f$ satisfying the constraints \eqref{eqn:constraint1} there must exist some $\lambda_1, \lambda_2$ such that 
\begin{align}
   & \log\binom{r}{j} - \log x_j - 1 = \lambda_1 + j\lambda_2
\end{align}
which implies
\begin{equation}\label{eqn:47}
    x_j = \binom{r}{j} e^{-1-\lambda_1 - j\lambda_2} = z_1 \binom{r}{j} z_2^j
\end{equation}

where $z_1 := e^{-1-\lambda_1}$ and $z_2 := e^{-\lambda_2}$. Plugging the above into line \eqref{eqn:constraint1} yields 
\begin{equation}\label{eqn:48}
   \frac{\D}{r} =  \sum_{j=0}^{r-1} x_j = \sum_{j=0}^{r-1} z_1 \binom{r}{j} z_2^j = z_1\sbrac{\rbrac{z_2 + 1}^r - z_2^r}
\end{equation}
and
\begin{equation}\label{eqn:49}
   \D c =  \sum_{j=0}^{r-1} j x_j = \sum_{j=1}^{r-1} z_1 j \binom{r}{j} z_2^j = \sum_{j=1}^{r-1} z_1 r \binom{r-1}{j-1} z_2^j =  r z_1 z_2  \sbrac{\rbrac{z_2 + 1}^{r-1} - z_2^{r-1}}.
\end{equation}
Dividing \eqref{eqn:49} by \eqref{eqn:48} yields that 
\begin{equation}\label{eqn:410}
    \frac{z_2  \sbrac{\rbrac{z_2 + 1}^{r-1} - z_2^{r-1}}}{\rbrac{z_2 + 1}^r - z_2^r} = c
\end{equation}
and we can easily solve for $z_1$ in \eqref{eqn:48} to get
\begin{equation}\label{eqn:411}
    z_1 = \frac{\D}{r\sbrac{\rbrac{z_2 + 1}^r - z_2^r}}.
\end{equation}
Finally, note that for these $x_j$ (from \eqref{eqn:47}) we have
\begin{align*}
   & -\sum_{0 \le j \le r-1}  x_j \log \frac{x_j}{\binom{r}{j}}\\
    &= -\sum_{0 \le j \le r-1} z_1 \binom{r}{j} z_2^j \log \rbrac{z_1  z_2^j}\\
    &= -\sum_{0 \le j \le r-1} z_1 \binom{r}{j} z_2^j \rbrac{ \log z_1 + j \log z_2} \\
    & = -z_1 \log z_1 \sum_{0 \le j \le r-1} \binom{r}{j} z_2^j - r z_1 z_2 \log z_2 \sum_{0 \le j \le r-1} \binom{r-1}{j-1} z_2^{j-1}\\
    &= -z_1 \log z_1 \sbrac{(1+z_2)^r - z_2^r}- r z_1 z_2 \log z_2\sbrac{(1+z_2)^{r-1} - z_2^{r-1}}\\
    &= -h(z_1) \sbrac{(1+z_2)^r - z_2^r}- r z_1 h(z_2)\sbrac{(1+z_2)^{r-1} - z_2^{r-1}}\\
    &= - \frac{\D}{r} \log z_1 - c \D \log z_2
\end{align*}
where on the last line we have used \eqref{eqn:410} and \eqref{eqn:411}.
\end{proof}

The tables below give bounds (which hold w.h.p.) on the maximum independent set in $\mc{H}(n, r, \D)$ for a few small values of $r, \D$. In Table \ref{tbl3} the $(r, \D)$ entry $b$ indicates that w.h.p. $\a(\mc{H}(n, r, \D)) \ge bn +o(n)$ (and an independent set of that size is given by the independent process). By Nie and Verstra\"ete \cite{NV} these bounds also extend to high-girth hypergraphs.

\begin{table}[H]
\centering
\begin{tabular}{ |c|c|c|c|c| } 
 \hline
  & $\D=2$ & $\D=3$ & $\D=4$ & $\D=5$ \\ 
 \hline
 $r=3$ & 0.614 & 0.567 & 0.531 & 0.503 \\ 
 \hline
 $r=4$ & 0.708 & 0.670 & 0.640 & 0.616\\ 
 \hline
 $r=5$ & 0.765 & 0.733 & 0.708 & 0.688 \\ 
 \hline
\end{tabular}
 \caption{Lower bounds}\label{tbl3}
\end{table}

In Table \ref{tbl4} the $(r, \D)$ entry $b$ indicates that w.h.p. $\a(\mc{H}(n, r, \D)) \le bn +o(n)$. The $\D=2$ column is the trivial bound $(r-1)/r$ (and we know these bounds are tight), while the other columns are using Claim \ref{clm:indupper}. Note that this implies the existence of $r$-uniform $\D$-regular hypergraphs of high girth whose independence number satisfies the same upper bound. 

\begin{table}[H]
\centering
\begin{tabular}{ |c|c|c|c|c| } 
 \hline
  & $\D=2$ & $\D=3$ & $\D=4$ & $\D=5$ \\ 
 \hline
 $r=3$ & 0.667 & 0.651 & 0.624 & 0.600 \\ 
 \hline
 $r=4$ & 0.750 & 0.744 & 0.724 & 0.706\\ 
 \hline
 $r=5$ & 0.800 & 0.798 & 0.784 & 0.769\\ 
 \hline
\end{tabular}
 \caption{ Upper bounds}\label{tbl4}
\end{table}

\section{Concluding remarks and Open questions}\label{sec:rmk}

The most compelling open questions at the moment are about regular hypergraphs of high girth. Tables \ref{tbl1} through \ref{tbl4} provide bounds on the matching number and independence number. 
\begin{opq}
How much can the entries in Tables \ref{tbl1} through \ref{tbl4} be improved?
\end{opq}
Tables \ref{tbl1} and \ref{tbl3} can most likely be improved by analyzing algorithms slightly better than the greedy algorithms. Tables \ref{tbl2} and \ref{tbl4} might also be improved by a more sophisticated first-moment argument (see for example McKay \cite{MK}). It may also be of interest to try to construct $r$-uniform $\D$-regular hypergraphs of large girth whose independence number (or matching number) is even smaller than $\mc{H}(n, r, \D)$. More precisely consider the following:
\begin{opq}
Fix some $r \ge 3$. Does there exist some $\D=\D(r)$ large enough such that every sequence $\mc{H}_n$ of $r$-uniform $\D$-regular hypergraphs on $n$ vertices with girth tending to infinity has a matching that covers all but at most $o(n)$ vertices? 
\end{opq}
Recall that Cooper, Frieze, Molloy and Reed \cite{CFMR96} proved that $\mc{H}(n, r, \D)$ w.h.p. has a perfect matching for all $\D$ large enough with respect to $r$. Note that for $r=2$ it is known that the answer to the above question is positive (for any $\D \ge 1$). Indeed, Bohman and Frieze \cite{BF11} showed that the Karp-Sipser algorithm produces a matching covering all but $o(n)$ vertices in random regular graphs. Since Karp-Sipser is a local algorithm, the results of Hoppen and Wormald \cite{HopWorm} apply, meaning that Karp-Sipser actually performs approximately the same on all regular graphs of high girth (for an alternative, non-algorithmic, approach to the same problem, see Flaxman and Hoory \cite{FH}). So for $r=2$ the above open question can actually be settled with a local algorithm; thus it seems worthwhile to try the same approach for $r \ge 3$. 

We finish with a question about our main theorems: 
\begin{opq}
Which conditions in Theorems \ref{thm:match} and \ref{thm:ind} can be weakened? What are the weakest assumptions necessary?
\end{opq}

\section{Acknowledgement} The authors would like to thank Lutz Warnke for a helpful conversation about Theorem \ref{thm:Lutz}.

\bibliographystyle{abbrv}
\bibliography{greedy-matching-hg}

\end{document}